\renewcommand{\P}{\mathbb{P}}
\newcommand{\F}{\mathbb{F}}
\newcommand{\Q}{\mathbb{Q}}
\newcommand{\Z}{\mathbb{Z}}
\newcommand{\R}{\mathbb{R}}
\newcommand{\alg}{\mathcal{A}}
\DeclareMathOperator{\Br}{Br}
\DeclareMathOperator{\inv}{inv}
\newcommand{\leg}[2]{\genfrac{(}{)}{}{}{#1}{#2}}
\newcommand{\ad}{\mathbb{A}}
\DeclareMathOperator{\Pic}{Pic}
\renewcommand{\H}{\mathrm{H}}
\newcommand{\Xb}{\bar{X}}
\DeclareMathOperator{\Gal}{Gal}
\newcommand{\Gm}{\mathbb{G}_\textrm{m}}
\newcommand{\et}{\mathrm{\acute{e}t}}
\DeclareMathOperator{\Spec}{Spec}
\DeclareMathOperator{\im}{im}
\newtheorem{theorem}{Theorem}[section]
\newtheorem{lemma}[theorem]{Lemma}
\newtheorem{proposition}[theorem]{Proposition}
\theoremstyle{definition}
\theoremstyle{remark}
\newtheorem*{remark}{Remark}
\numberwithin{equation}{section}
\title{The Brauer--Manin obstruction on a general diagonal quartic surface}
\author[M. Bright]{Martin Bright}
\address{Mathematics Institute, University of Warwick, Coventry CV4
  7AL, UK}
\email{M.Bright@warwick.ac.uk}
\date{}
\begin{document}

\begin{abstract}
We show that, on a sufficiently general diagonal quartic surface,
there is no Brauer--Manin obstruction to the existence of rational
points.
\end{abstract}

\subjclass[2010]{Primary 11D25; Secondary 11G35, 14G25}
\keywords{Brauer--Manin obstruction, quartic surface, K3 surface}

\maketitle

\section{Introduction}

Our object of study is the diagonal quartic surface $X \subset
\P^3_\Q$ defined by the equation
\begin{equation} \label{eq:surface}
a_0 X_0^4 + a_1 X_1^4 + a_2 X_2^4 + a_3 X_3^4 = 0
\end{equation}
where $a_0,a_1,a_2,a_3 \in \Q$ are non-zero rational coefficients.

Multiplying the equation~\eqref{eq:surface} through by a constant,
permuting the coefficients, or changing any of the coefficients by a
fourth power gives rise to another equation defining a surface which
is clearly isomorphic (over $\Q$) to the original one.  Two diagonal
quartic equations related by such operations will be called
\emph{equivalent}.  In particular, after replacing $X$ with an
equivalent surface, we may assume that the coefficients $a_i$ are
integers with no common factor, and that none of them is divisible by
a fourth power.

When we talk about the \emph{reduction} of $X$ modulo some prime $p$,
we mean simply the variety in $\P^3_{\F_p}$ defined by reducing the
equation~\eqref{eq:surface} modulo $p$.  Suppose that $p$ is odd.
Then, according to the number of coefficients divisible by $p$, the
reduction at $p$ will be either: a smooth diagonal quartic surface; a
cone over a smooth diagonal quartic curve; (geometrically) a union of
four planes; or a quadruple plane.

\begin{theorem} \label{thm:main}
Let $X$ be the diagonal quartic surface over $\Q$ given
by~\eqref{eq:surface}, and let $H$ be the subgroup of $\Q^\times /
(\Q^\times)^4$ generated by $-1$, $4$ and the quotients $a_i / a_j$.
Suppose that the following conditions are satisfied:
\begin{enumerate}
\item \label{it:els} $X(\Q_v) \neq \emptyset$ for all places $v$ of $\Q$;
\item \label{it:235} $H \cap \{ 2,3,5 \} = \emptyset$;
\item \label{it:maximal} $|H| = 256$;
\item \label{it:reduction} there is some odd prime $p$ which divides
  precisely one of the coefficients $a_i$, and does so to an odd
  power; moreover, if $p \in \{ 7, 11, 17, 41 \}$, then the reduction
  of $X$ modulo $p$ is not equivalent to $x^4 + y^4 + z^4 = 0$.
\end{enumerate}
Then $\Br X / \Br \Q$ has order $2$, and there is no Brauer--Manin
obstruction to the existence of rational points on $X$.
\end{theorem}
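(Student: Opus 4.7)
The claim naturally splits into two parts: computing that $\Br X/\Br\Q$ has order $2$, and then showing that the single nontrivial class does not obstruct. For the first part my starting point is the Hochschild--Serre spectral sequence. Since $X$ is a K3 surface we have $\H^1(\Xb,\Gm)=0$, and since condition (i) supplies a rational (or at least a $\Q_v$-) point that splits $\Br\Q\to\Br_1 X$, one obtains an isomorphism $\Br_1 X/\Br\Q\cong \H^1(\Gal(\bar\Q/\Q),\Pic\Xb)$. The geometric Picard group of a diagonal quartic is known explicitly: it is free of rank $20$, with an explicit basis of divisor classes defined over $K=\Q(i,\sqrt[4]{a_i/a_j})$. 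Condition (iii), $|H|=256$, is precisely the statement $[K:\Q]=256$, making the Galois action as large as possible. From this maximal, and hence canonical, action on a concrete permutation-like module I would then compute $\H^1(\Gal(K/\Q),\Pic X_K)$ directly and expect to obtain $\Z/2\Z$.

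To complete the first part I must show that the transcendental Brauer group $\Br X/\Br_1 X$ contributes nothing modulo $\Br\Q$. On a K3 surface this quotient can be nonzero a priori, and this is where conditions (ii) and (iv) should come in. The strategy would be to bound the order and ramification of a putative transcendental class: exclusion of $2,3,5$ from $H$ in (ii) rules out small-order classes that ramify at those primes, while (iv) supplies an odd prime of very controlled (cone) reduction at which any surviving transcendental class must become trivial after restriction, and hence was trivial globally. The exceptional primes $\{7,11,17,41\}$ in (iv) presumably correspond to Jacobi-sum coincidences producing extra divisor classes on the reduction, so they must be excluded by hand.

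For the second part, fix a representative $\alg\in\Br X$ of the nontrivial class of $\Br X/\Br\Q$. For any adelic point $(P_v)\in X(\ad_\Q)$ I need to produce another adelic point at which $\sum_v \inv_v \alg(P_v)=0$. The key is once more the prime $p$ from (iv): the reduction is a cone over a smooth diagonal quartic curve, and an explicit local computation should show that the evaluation $P_p\mapsto \inv_p \alg(P_p)$ is surjective onto $\tfrac12\Z/\Z$. Varying the local point at this single place then lets us adjust the global invariant sum to zero without touching any other place, ruling out the obstruction. The hardest step, I expect, is the transcendental vanishing, since it requires genuine control over all unramified-away-from-bad classes and is the place where the three numerical hypotheses (ii)--(iv) have to conspire; the algebraic computation and the final local-invariant calculation at $p$ are in principle mechanical once the Picard module and the reduction geometry at $p$ are in hand.
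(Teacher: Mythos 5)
Your overall decomposition — algebraic $\H^1(\Q,\Pic\Xb)$ computation, transcendental vanishing, then a local-invariant surjectivity argument at a single prime — matches the paper's structure, and the first and third pieces are on target. But you misassign the role of condition~\ref{it:reduction}, and your proposed mechanism for the transcendental vanishing is not what makes the theorem go.

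In the paper, the transcendental vanishing $\Br X = \Br_1 X$ is a black-box input: it is Corollary~3.3 of Ieronymou--Skorobogatov--Zarhin, and it uses \emph{only} condition~\ref{it:235}. Condition~\ref{it:reduction} plays no part in killing transcendental classes. Your sketch instead imagines using the cone reduction at $p$ to trivialise a putative transcendental class by restriction, and guesses that $\{7,11,17,41\}$ are excluded for ``Jacobi-sum'' reasons creating extra divisors on the reduction. Neither of these is how it works, and I don't see how the restriction argument could be made to close: a transcendental class on a K3 surface is not controlled by its restriction to a single bad fibre in any obvious way, and the deep content of the ISZ result is precisely that one needs a genuine analysis of the transcendental lattice, not a single local specialisation.

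The actual role of condition~\ref{it:reduction} is entirely in the final local computation, where your instincts are correct. The paper writes down an explicit quaternion Azumaya algebra $\alg = (\theta, f/X_0^2)$ with $\theta = a_0a_1a_2a_3$ and $f$ the pullback of a tangent plane to the associated quadric; at the prime $p$ of condition~\ref{it:reduction}, $v_p(\theta)$ is odd, so $\inv_p\alg(Q)$ is read off from $\leg{\tilde f(\tilde Q)}{p}$, and the question becomes whether the reduced quadratic form $\tilde f$ takes both square and non-square values on the plane quartic curve $C/\F_p$. This is rephrased as the existence of rational points with $T\neq 0$ on a smooth genus-$5$ double cover $E_c'$ of $C$, for each square class $c$. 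The Hasse--Weil bound settles all $p>114$, and a finite computer search over $p<114$ shows that the only failures occur when $p\in\{7,11,17,41\}$ and the reduction is the Fermat cone — that is the real origin of the exceptional list, not Jacobi sums. Your proposal would therefore need to be repaired by (a) invoking the ISZ theorem rather than improvising a ramification argument for the transcendental part, and (b) grounding the exclusion of $\{7,11,17,41\}$ in the point-count of the covering curve.
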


\begin{remark}
It is easy to check that the group $H$ may also be generated by $-1$,
$4$ and $a_i/a_0$ $(i=1,2,3)$.  It follows that $H$ has order dividing
$256$.
\end{remark}

This theorem combines several ingredients, many of which are already
known.  The deepest part is the result, due to Ieronymou, Skorobogatov
and Zarhin~\cite{ISZ}, that condition~\ref{it:235} above implies the
vanishing of the transcendental part of the Brauer group of $X$,
meaning that $\Br X = \Br_1 X$.  The calculation that, under
condition~\ref{it:maximal} above, $\Br_1 X / \Br \Q$ has order $2$ can
be found in the tables contained in the author's
thesis~\cite{Bright:thesis}.  The new ingredients contained in this
article are a more geometric description of the non-trivial class of
Azumaya algebras on $X$ and a proof that these algebras, under
condition~\ref{it:reduction} above, give no obstruction to the
existence of rational points on $X$.

\subsection{Background}

Let us recall the definition of the Brauer--Manin obstruction; see
Skorobogatov's book~\cite{Skorobogatov:TRP} for more details.  Fix a
number field $k$ and a smooth, projective, geometrically irreducible
variety $X$ over $k$.  We define the \emph{Brauer group} of $X$ to be
$\Br X = \H^2_\et(X, \Gm)$.  If $K$ is any field containing $k$, and
$P \in X(K)$ a $K$-point of $X$, then there is an evaluation
homomorphism $\Br X \to \Br K$, $\alg \mapsto \alg(P)$, which is the
natural map coming from the morphism $P\colon \Spec K \to X$.  In
particular, this applies if $K=k_v$ is a completion of $k$.

As $X$ is projective, the set of adelic points of $X$ is $X(\ad_k) =
\prod_v X(k_v)$, the product being over all places of $k$.  The set
$X(\ad_k)$ is non-empty precisely when each $X(k_v)$ is non-empty,
that is, $X$ has points over every completion of $k$.  Let $\inv_v
\colon \Br k_v \to \Q/\Z$ be the invariant map.  Define the following
subset of the adelic points:
\[
X(\ad_k)^{\Br} = \big\{ (P_v) \in X(\ad_k) \  \big| \  \sum_v \inv_v \alg(P_v) = 0
\text{ for all } \alg \in \Br X \big\} \text{.}
\]
Suppose that $X(\ad_k)$ is non-empty.  By class field theory, the
diagonal image of $X(k)$ is contained in $X(\ad_k)^{\Br}$; if in fact
$X(\ad_k)^{\Br}$ is empty, then we say there is a \emph{Brauer--Manin
  obstruction} to the existence of $k$-rational points on $X$.

Let $\Xb$ denote the base change of $X$ to an algebraic closure
$\bar{k}$ of $k$.  There is a natural filtration on $\Br X$, given by
$\Br_0 X \subseteq \Br_1 X \subseteq \Br X$, where
\begin{itemize}
\item $\Br_0 X = \im( \Br k \to \Br X)$ consists of the constant
  classes in $\Br X$;
\item $\Br_1 X = \ker( \Br X \to \Br \Xb)$ is the \emph{algebraic}
  part of the Brauer group, consisting of those classes which are
  split by base change to $\bar{k}$.
\end{itemize}
If $X(\ad_k) \neq \emptyset$, then the natural homomorphism $\Br k \to
\Br X$ is injective, and we will think of $\Br k$ as being contained
in $\Br X$.  The elements of $\Br k$ do not contribute to the
Brauer--Manin obstruction, so in describing $X(\ad_k)^{\Br}$ it is
enough to consider the quotient $\Br X / \Br k$.

Elements of $\Br X \setminus \Br_1 X$ are called
\emph{transcendental}.  For certain types of varieties $X$, we know
that $\Br \Xb = 0$ and therefore that $\Br X$ is entirely algebraic:
this is true in particular if $X$ is a curve or a rational surface.
It is, however, certainly not true if $X$ is a K3 surface, such as our
diagonal quartic surface.  The transcendental part of the Brauer group
of a diagonal quartic surface has been studied by
Ieronymou~\cite{Ieronymou} and by Ieronymou, Skorobogatov and
Zarhin~\cite{ISZ}.  The algebraic part of the Brauer group of a
diagonal quartic surface has been studied by the present
author~\cite{Bright:thesis, Bright:JSC-2005}.

\subsection{Outline of the proof}

We will now describe an outline of the proof of Theorem~1.1, with the
details postponed to Section~\ref{sec:alg}.  As mentioned above, the
first ingredient is the following result of Ieronymou, Skorobogatov
and Zarhin.
\begin{theorem}[{\cite[Corollary~3.3]{ISZ}}]
Let $X$ and $H$ be as in the introduction, and suppose that $H \cap
\{2,3,5\} = \emptyset$.  Then $\Br X = \Br_1 X$.
\end{theorem}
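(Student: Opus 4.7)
The plan is to reduce the theorem to a Galois-cohomological question about the geometric Brauer group and then to exploit its explicit structure. The Hochschild--Serre spectral sequence for the cover $\Xb \to X$ yields, in low degrees, an injection
\[
\Br X / \Br_1 X \hookrightarrow (\Br \Xb)^{\Gal(\bar\Q/\Q)},
\]
so the theorem will follow once $(\Br \Xb)^{\Gal(\bar\Q/\Q)} = 0$ is established. The problem thus becomes one about a small, completely explicit Galois representation.

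Next, I would use that $X$ is a singular K3 surface of geometric Picard rank $20$, so its transcendental lattice $T$ has rank $2$; Kummer theory then identifies, $\ell$-primary part by $\ell$-primary part, the transcendental quotient of $\Br \Xb\{\ell\}$ with $\operatorname{Hom}(T \otimes \Z_\ell, \Q_\ell/\Z_\ell)$ as $\Gal(\bar\Q/\Q)$-modules. It therefore suffices to show, for each prime $\ell$, that this Galois module has no non-zero invariants.

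The Galois action on $T$ is computable via the classical description of $X$ as isogenous to a Kummer quotient of a product of two CM elliptic curves over $\Q(i)$. In this description, Galois acts through a finite quotient of $\Gal(\Q(i, \sqrt[4]{a_i/a_j})/\Q)$, and the hypothesis $H \cap \{2,3,5\} = \emptyset$ translates into precise splitting behaviour of the primes $2$, $3$, $5$ in this extension. For each $\ell \in \{2,3,5\}$, I would produce a Frobenius element at a suitable auxiliary prime whose action on $T \otimes \Z_\ell$ has no eigenvalue equal to $1$ after dualising, which kills $\ell$-primary invariants. For $\ell \geq 7$ a uniform Tate-twist argument, comparing the cyclotomic character with the determinant of Frobenius on $T \otimes \Z_\ell$, should rule out invariants once $\ell$ is coprime to the discriminant of $T$ and the bad primes of $X$.

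The main obstacle will almost certainly be the prime $\ell = 2$: since $2$ ramifies in $\Z[i]$, the CM order governing $T$, the generic cyclotomic-character argument fails, and the hypothesis $2 \notin H$ has to be used in a delicate local calculation to eliminate the $2$-primary invariants. I expect this to be the technical heart of the proof, and to explain why the hypothesis involves precisely the small primes $\{2,3,5\}$ rather than a larger or smaller set of exceptional primes.
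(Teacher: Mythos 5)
This is an attributed theorem, quoted verbatim from Ieronymou--Skorobogatov--Zarhin \cite[Corollary~3.3]{ISZ}; the paper under review offers no proof of it, merely cites it as one of the three ingredients feeding Theorem~\ref{thm:main}. There is therefore no proof in the paper against which your argument can be compared. What can be said is whether your sketch is a plausible reconstruction of the argument of \cite{ISZ}, and here the picture is mixed.

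Your first two reductions are sound and do correspond to the actual strategy: use the Hochschild--Serre spectral sequence (together with $\H^3(\Q,\bar\Q^\times)=0$) to embed $\Br X/\Br_1 X$ into $(\Br\Xb)^{\Gal(\bar\Q/\Q)}$, observe that a diagonal quartic is a singular K3 of geometric Picard rank $20$, and identify the $\ell$-primary Brauer torsion with a Galois module built from the rank-$2$ transcendental lattice $T$, which carries complex multiplication by $\Z[i]$. That framing is correct and is indeed the skeleton of what Ieronymou, Skorobogatov and Zarhin do (following work of Pinch--Swinnerton-Dyer and Shioda on the CM structure).

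Where your sketch goes wrong is in the interpretation of the hypothesis and in the uniformity claims. The condition $H\cap\{2,3,5\}=\emptyset$ is \emph{not} a statement about the splitting behaviour of the primes $2,3,5$ in the field $K=\Q(i,\sqrt{2},\sqrt[4]{a_1/a_0},\sqrt[4]{a_2/a_0},\sqrt[4]{a_3/a_0})$; it is a statement about the \emph{elements} $2,3,5$ of $\Q^\times/(\Q^\times)^4$ not lying in the subgroup generated by $-1$, $4$ and the $a_i/a_j$. Concretely it says that the surface is not equivalent (over $\Q$) to one with coefficients involving $2$, $3$ or $5$ in a specific multiplicative pattern. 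The content of \cite{ISZ} is that $(\Br\Xb)^{\Gal}$ can be non-zero \emph{only} when such a pattern occurs, so the hypothesis kills all invariants at once; there is no Frobenius-element-at-an-auxiliary-prime argument prime-by-prime of the kind you describe. Your treatment of $\ell\ge 7$ also has a gap: you restrict to $\ell$ coprime to the bad primes of $X$, but the coefficients $a_i$ (and hence the bad primes) can certainly be divisible by primes $\ge 7$, whereas the result of \cite{ISZ} is unconditional for all $\ell\ge 7$. Finally, the realisation of $X$ as (birational to) a Kummer quotient of a product of CM elliptic curves holds only after a possibly large base extension for a general diagonal quartic; what one really uses is the abstract CM structure on $T$ and the associated Hecke character, not a literal Kummer-surface model over $\Q(i)$. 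Your prediction that $\ell=2$ would be the technical heart is, incidentally, consonant with the literature: the $2$-primary computation is due to Ieronymou alone and the odd part to \cite{ISZ}, so the two cases genuinely were handled separately.
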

So any Brauer--Manin obstruction on $X$ comes entirely from the
algebraic Brauer group.  The structure of $\Br_1 X / \Br \Q$ as an
abstract group can be computed using the isomorphism
\[
\Br_1 X / \Br \Q \cong \H^1(\Q, \Pic \Xb).
\]
In the case of diagonal quartic surfaces, $\Pic\Xb$ is generated by
the classes of the obvious 48 straight lines on $\Xb$, and
condition~\ref{it:maximal} of Theorem~\ref{thm:main} ensures that the
Galois action on these lines is the most general possible.
Lemma~\ref{lem:h1pic} below shows that $\Br_1 X / \Br \Q$ is of order
$2$.

It remains to compute the Brauer--Manin obstruction coming from the
non-trivial class in $\Br_1 X / \Br \Q$.  In Lemma~\ref{lem:alg}, we
describe explicitly an Azumaya algebra $\alg$ which may be defined on
any diagonal quartic surface~\eqref{eq:surface} for which $a_0 a_1 a_2
a_3$ is not a square.  Condition~\ref{it:maximal} implies in
particular that $a_0 a_1 a_2 a_3$ is non-square, so the algebra $\alg$
is defined on our particular surface.

The proof is completed by Lemma~\ref{lem:obs}.  This states that,
given a prime $p$ satisfying condition~\ref{it:reduction} of
Theorem~\ref{thm:main}, the Azumaya algebra $\alg$, evaluated at
different points of $X(\Q_p)$, gives invariants of both $0$ and
$\frac{1}{2}$.  In particular, $\alg$ is not equivalent to a constant
algebra, and provides no obstruction to the existence of rational
points on $X$.

\section{The algebraic Brauer--Manin obstruction}
\label{sec:alg}

In this section we describe an explicit Azumaya algebra on our
diagonal quartic surface.  For this purpose we may replace $\Q$ by an
arbitrary number field $k$.  Let $X \subset \P^3_k$ be the diagonal
quartic surface~\eqref{eq:surface}, and let $Y \subset \P^3_k$ be the
smooth quadric surface defined by
\[
a_0 Y_0^2 + a_1 Y_1^2 + a_2 Y_2^2 + a_3 Y_3^2 = 0 \text{.}
\]
There is a morphism $\phi\colon X \to Y$ given by $Y_i = X_i^2$.  If
$X$ is everywhere locally soluble, then so is $Y$; and, since $Y$ is a
quadric, it follows that $Y$ has a $k$-rational point.

\begin{lemma} \label{lem:alg}
Suppose that $X$ is everywhere locally soluble.  Pick a point
$P=[y_0,y_1,y_2,y_3] \in Y(k)$, and let $g \in k[Y_0,Y_1,Y_2,Y_3]$ be
the linear form
\[
g = a_0 y_0 Y_0 + a_1 y_1 Y_1 + a_2 y_2 Y_2 + a_3 y_3 Y_3
\]
defining the tangent plane to $Y$ at $P$.  Let
\begin{equation} \label{eq:fdef}
f = \phi^* g = a_0 y_0 X_0^2 + a_1 y_1 X_1^2 + a_2 y_2 X_2^2 + a_3 y_3 X_3^2
\end{equation}
be the quadratic form obtained by pulling $g$ back to $X$.  Write
$\theta = a_0 a_1 a_2 a_3$.  Then the quaternion algebra $\alg =
(\theta, f/X_0^2) \in \Br k(X)$ is an Azumaya algebra on $X$.  The
class of $\alg$ in $\Br X / \Br k$ is independent of the choice of
$P$.
\end{lemma}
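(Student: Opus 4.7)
The plan has two parts: first, verify that $\alg$ is unramified at every prime divisor of $X$ (so that, by purity on the smooth surface $X$, it represents a class in $\Br X$, hence is the class of an Azumaya algebra); second, show the resulting class in $\Br X/\Br k$ is independent of $P$.

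For the first part, since $\theta \in k^\times$ is a constant, the residue of $(\theta, u) \in \Br k(X)$ at any prime divisor $D$ equals the class of $\theta^{v_D(u)}$ in $\kappa(D)^\times / (\kappa(D)^\times)^2$. With $u = f/X_0^2$ one has $v_D(u) \equiv v_D(f) \pmod 2$, so potentially nontrivial residues occur at components $D$ of $\mathrm{div}_X(f) = \phi^*(L_1 + L_2)$, where $L_1, L_2$ are the two lines cut out on $Y$ by the tangent plane at $P$. The discriminant of $Y$ is $\theta$ modulo squares, so the two rulings of $Y$, and with them the lines $L_1, L_2$, are defined over $k(\sqrt\theta)$; when $\theta$ is a non-square, $\Gal(k(\sqrt\theta)/k)$ swaps these lines and $L_1 \cup L_2$ is a single $k$-irreducible curve with function field $k(\sqrt\theta)(t)$. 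Since $\phi$ is finite, $\phi|_D$ is dominant onto a $k$-irreducible component of $L_1 + L_2$; in either case $k(\sqrt\theta)$ embeds into $\kappa(D)$, making $\theta$ a square there and killing the residue.

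For the second part, pick another point $P' \in Y(k)$ with associated form $f'$; the difference of Brauer classes is $(\theta, f/f')$, and I will show it lies in $\Br k$. Over $k(\sqrt\theta)$, $Y$ becomes $\P^1 \times \P^1$ with projections $p_1, p_2$ corresponding to the two rulings and swapped by the nontrivial $\sigma \in \Gal(k(\sqrt\theta)/k)$. Any $k$-point of $Y$ then corresponds to a pair $(a, \sigma a)$ with $a \in \P^1(k(\sqrt\theta))$, and the tangent plane section at it has divisor $p_1^{-1}(a) + p_2^{-1}(\sigma a)$. A short divisor calculation yields $g/g' = p_1^*(u) \cdot p_2^*(\sigma u) \cdot c$, where $u$ is any rational function on $\P^1$ with divisor $(a) - (a')$ and $c \in k(\sqrt\theta)^\times$; using $p_2^*(\sigma u) = \sigma(p_1^* u)$ together with the $\sigma$-invariance of $g/g'$ forces $c \in k^\times$. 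Hence $g/g'$ is $c$ times the norm of $p_1^* u$ from $k(\sqrt\theta)(Y)$ down to $k(Y)$, and pulling back by $\phi$ shows $f/f'$ is $c$ times a norm from $k(\sqrt\theta)(X)$. Since norms from $k(\sqrt\theta)$ kill the symbol with $\theta$, we conclude $(\theta, f/f') = (\theta, c) \in \Br k$.

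I expect the main obstacle to be keeping the Galois action on $Y_{k(\sqrt\theta)} \cong \P^1 \times \P^1$ straight in the norm calculation of Part 2, together with correctly identifying the image $\phi(D)$ in Part 1 when the lines $L_1, L_2$ are Galois conjugate. Both steps hinge on the fact that the rulings of $Y$ are defined over $k(\sqrt\theta)$; once that geometry is set up, the rest is routine symbol manipulation using the standard residue and norm formulas for quaternion algebras.
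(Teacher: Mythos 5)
Your proposal is correct and rests on the same geometry as the paper's proof: the tangent plane at $P$ meets $Y$ in two lines conjugate over $k(\sqrt\theta)$, and two tangent‑plane sections differ by the norm of a function coming from a linear equivalence of lines in the same ruling. The difference is in how Part~1 is packaged. The paper invokes Swinnerton‑Dyer's criterion (\cite[Lemma~11]{SD:PLMS-2000}) that a symbol $(\theta, h)$ is Azumaya on $X$ precisely when $\mathrm{div}(h)$ is the norm of a $k(\sqrt\theta)$‑divisor, and then exhibits $(f/X_0^2) = D + D' - 2D_0 = N_{k(\sqrt\theta)/k}(D - D_0)$ directly. You instead compute residues $\overline{\theta^{\,v_D(f/X_0^2)}} \in \kappa(D)^\times/(\kappa(D)^\times)^2$ at each prime divisor $D$ and invoke purity; the two verifications are logically equivalent, and your version is more self‑contained since it does not require the external norm‑divisor criterion, at the small cost of needing the observation that a $k$‑irreducible $D$ dominating the (Galois‑irreducible) curve $L_1 \cup L_2$ must have $\sqrt\theta$ in its function field. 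Your Part~2 is the paper's Part~2 in $\P^1\times\P^1$ coordinates: your $p_1^*(u)$ is the paper's function $h$ with $(h) = L - L_1$, your $c$ is the paper's unnamed constant, and the conclusion $(\theta, f/f') = (\theta, c)$ is identical. One small presentational point: in Part~1 you only need to check residues at components $D$ with $v_D(f)$ odd, and your argument in fact shows more (that $\sqrt\theta$ lies in $\kappa(D)$ for every component of $\mathrm{div}_0(f)$); this is harmless but worth being explicit about so a reader does not worry about the even‑multiplicity components or the components of $\mathrm{div}(X_0)$.
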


\begin{remark}
Since $f$ is a quadratic form on $X$ and not a rational function, we
divide it by $X_0^2$ to obtain an element of $k(X)^\times$.  As
always, when defining a quaternion algebra over a field, $(a,b)$ and
$(a,bc^2)$ give isomorphic algebras.  So the choice of $X_0$ here is
completely arbitrary; we could replace it with any other $X_i$ or
indeed any linear form.
\end{remark}

\begin{remark}
The coordinates of the point $P$, and therefore the linear form $g$,
are only defined up to multiplication by a scalar.  So the point $P$
only determines the algebra $\alg$ up to an element of $\Br k$.
\end{remark}

\begin{proof}[Proof of Lemma~\ref{lem:alg}]
If $\theta$ is a square in $k$, then $\alg$ is isomorphic to the
algebra of $2\times 2$ matrices over $k(X)$, and the conclusions are
trivially true.  So suppose that $\theta$ is not a square in $k$.

As described for example in~\cite[Lemma~11]{SD:PLMS-2000}, to show
that $\alg$ is an Azumaya algebra we need to show that the principal
divisor $(f/X_0^2)$ is the norm of a divisor on $X$ defined over
$k(\sqrt{\theta})$.  Recall that, over $\bar{\Q}$, $\bar{Y}$ admits two
pencils of straight lines, the classes of which generate $\Pic \bar{Y}
\cong \Z^2$.  The tangent plane to $Y$ at $P$ intersects $Y$ in two
lines, $L$ and $L'$, which are each defined over $k(\sqrt{\theta})$
and conjugate over $k$; so the divisor of vanishing of $g$ on $Y$ is
$L+L'$.  Let $D = \phi^* L$ be the divisor obtained by pulling $L$
back to $X$, and similarly $D' = \phi^* L'$.  Then $(f/X_0^2) = D + D'
- 2 D_0 = N_{k(\sqrt{\theta})/k}(D - D_0)$, where $D_0$ is the divisor
on $X$ defined by $X_0=0$.

Independence of $P$ is a routine calculation, but we reproduce it for
the sake of completeness.  Let $P_1 \in Y(k)$ be another point, and
let $g_1$ be the corresponding linear form defining the tangent plane
to $Y$ at $P_1$.  Then the divisor of vanishing of $g_1$ on $Y$ is
$L_1 + L_1'$, where $L_1$ is a line, defined over $k(\sqrt{\theta})$
and linearly equivalent to $L$, and $L_1'$ its conjugate over $k$.  So
there exists a rational function $h$ on $Y$, defined over
$k(\sqrt{\theta})$, such that $(h) = L - L_1$.  Then
\[
(g_1N_{k(\sqrt{\theta})/k}h) = (L_1 + L_1') + (L - L_1) + (L' - L_1')
= L + L'
\]
and so $g_1 N_{k(\sqrt{\theta}/k)} h$ is a constant multiple of $g$.
Let $f_1 = \phi^* g_1$; then $f/f_1$ is a constant multiplied by the
norm of a rational function defined over $k(\sqrt{\theta})$, so
$(\theta, f_1/X_0^2)$ differs from $(\theta, f/X_0^2)$ only by a
constant algebra.
\end{proof}

\begin{remark}
Even when $\theta$ is not a square in $k$, it is still possible for
the class of $\alg$ in $\Br X / \Br k$ to be trivial.  For example,
taking $k=\Q$, the tables of~\cite{Bright:thesis} show that, for any
integers $c_1, c_2$, the diagonal quartic surface
\[
X_0^4 + c_1 X_1^4 + c_2 X_2^4 - c_1^2 c_2^2 X_3^4
\]
has $\Br_1 X = \Br \Q$.  In particular, the algebra $\alg$ on this
surface is equivalent to a constant algebra.  Note that
Lemma~\ref{lem:obs} below does not apply in this case, since no prime
divides exactly one of the coefficients.
\end{remark}

\begin{lemma}\label{lem:h1pic}
Let $X$ be a diagonal quartic surface over $\Q$.  In the notation of
Theorem~\ref{thm:main}, suppose that $|H|=256$.  Then $\Br_1 X / \Br
\Q$ is of order $2$.
\end{lemma}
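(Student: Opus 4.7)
The plan is to use the Hochschild--Serre spectral sequence, which, together with the vanishing of $\H^3(\Q, \Gm)$ and the injectivity of $\Br \Q \to \Br X$ (a consequence of local solubility, hypothesis~\ref{it:els} of Theorem~\ref{thm:main}), gives the standard identification
\[
\Br_1 X / \Br \Q \;\cong\; \H^1(\Q, \Pic \Xb).
\]
Everything is then reduced to a computation in Galois cohomology.

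First I identify a finite splitting field $K/\Q$ for the Galois action on $\Pic \Xb$. The Picard group of $\Xb$ is known to be generated by the 48 straight lines on $\Xb$, and each such line is cut out by equations of the form $X_i - \zeta X_j = X_k - \eta X_l = 0$ with $\zeta^4 = -a_j/a_i$ and $\eta^4 = -a_l/a_k$, where $\{\{i,j\},\{k,l\}\}$ runs over the three partitions of $\{0,1,2,3\}$. Hence all 48 lines are defined over $K = \Q\bigl(i, \sqrt[4]{a_1/a_0}, \sqrt[4]{a_2/a_0}, \sqrt[4]{a_3/a_0}\bigr)$. By Kummer theory, the assumption $|H|=256$ forces $[K:\Q]$ to attain its maximum value $256$, so that $G=\Gal(K/\Q)$ realises the largest possible Galois group acting on the set of lines. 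Since $\Pic \Xb$ is a finitely generated torsion-free $\Z$-module on which $\Gal(\bar\Q/K)$ acts trivially, every continuous homomorphism from this profinite group to $\Pic \Xb$ vanishes, so inflation--restriction collapses to $\H^1(\Q, \Pic \Xb) = \H^1(G, \Pic \Xb)$.

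Next I would describe $\Pic \Xb$ explicitly as a $G$-lattice. The 48 lines partition into three $G$-stable subsets of size 16, one for each partition of $\{0,1,2,3\}$; within each such subset the action is controlled by the Kummer group of the relevant pair of ratios, together with the symmetry swapping the two pairs. Quotienting the free abelian group on the 48 lines by the linear equivalences coming from the three pencils of conics on $X$ (the rulings pulled back from the three nodal quartic projections) realises $\Pic \Xb$ as a $G$-lattice of rank $20$, with an explicit $G$-action.

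The main obstacle is then the final cohomology calculation: computing $\H^1(G, \Pic \Xb)$ for a group of order $256$ acting on a rank-$20$ lattice. This is precisely the content of the explicit tables in \cite{Bright:thesis}, where, for the maximal Galois action, the value $\Z/2\Z$ is recorded; in practice one can reproduce it either by hand, exploiting the semidirect-product structure of $G$ and a Lyndon--Hochschild--Serre spectral sequence argument filtering through the subgroup fixing each individual partition of the pairs, or directly with a computer algebra package.
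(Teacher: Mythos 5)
Your overall strategy mirrors the paper's: reduce to $\H^1(\Q,\Pic\Xb)$, find a finite splitting field $K$ for the $48$ lines, use inflation--restriction to replace $\Q$ by $\Gal(K/\Q)$, and then cite the explicit tables in~\cite{Bright:thesis} for the case of maximal Galois action. However, there is a concrete arithmetic error in your choice of $K$, and it propagates to a gap in the step matching $|H|$ with $[K:\Q]$.

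You write $K=\Q\bigl(i,\sqrt[4]{a_1/a_0},\sqrt[4]{a_2/a_0},\sqrt[4]{a_3/a_0}\bigr)$, but your own description of the lines requires a $\zeta$ with $\zeta^4 = -a_j/a_i$, i.e.\ a \emph{fourth root of $-a_j/a_i$}. Such a root is $\zeta_8\sqrt[4]{a_j/a_i}$ for a primitive eighth root of unity $\zeta_8$, and $\zeta_8\notin\Q(i)$: one needs $\sqrt 2$ as well, so that the correct splitting field is $K=\Q\bigl(i,\sqrt 2,\sqrt[4]{a_1/a_0},\sqrt[4]{a_2/a_0},\sqrt[4]{a_3/a_0}\bigr)$, as in the paper. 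With your $K$ the maximal possible degree is $128$, not $256$, so the sentence ``the assumption $|H|=256$ forces $[K:\Q]$ to attain its maximum value $256$'' is internally inconsistent.

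Even after correcting $K$, the implication $|H|=256\Rightarrow[K:\Q]=256$ is not just ``Kummer theory'' as stated; it is the nontrivial identity $[K:\Q]=|H|$, and its proof has a subtlety you have skipped. The natural map $\Q^\times/(\Q^\times)^4\to\Q(i)^\times/(\Q(i)^\times)^4$ has a kernel of order $2$, generated by the class of $-4$ (since $-4=(1+i)^4$). The paper uses precisely this to identify $H'$, the Kummer group over $\Q(i)$ cutting out $K/\Q(i)$, with the image $r(H)$, and then concludes $[K:\Q]=2|H'|=|H|$. Without this, the hypothesis $|H|=256$ (a statement about a subgroup of $\Q^\times/(\Q^\times)^4$) does not directly yield a degree bound for $K$. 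The remaining parts of your argument — the inflation--restriction collapse because $\Pic\Xb$ is torsion-free, and the appeal to the rank-$20$ $G$-lattice computation in~\cite{Bright:thesis} — are in line with the paper's proof.
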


\begin{proof}
This calculation can be found in~\cite{Bright:thesis}, and depends on
the well-known isomorphism $\Br_1 X / \Br \Q \cong \H^1(\Q, \Pic\Xb)$.
What follows is a brief summary of the calculation.  The variety $X$
contains (at least) 48 straight lines: for example, setting
\[
a_0 X_0^4 + a_1 X_1^4 = 0, \qquad a_2 X_2^4 + a_3 X_3^4 = 0
\]
and factorising each side over $\bar{\Q}$ gives equations for 16 lines;
the other 32 are obtained by permuting the indices.  The lines are all
defined over the extension $K = \Q(i, \sqrt{2}, \sqrt[4]{a_1/a_0},
\sqrt[4]{a_2/a_0}, \sqrt[4]{a_3/a_0})$.  The classes of the lines
generate the Picard group of $X$ over $\bar{\Q}$, which is free of
rank $20$.  By the inflation-restriction exact sequence, we have
$\H^1(\Q, \Pic\Xb) = \H^1(K/\Q, \Pic X_K)$ and computing this
cohomology group comes down to knowing the Galois group $\Gal(K/\Q)$
and its action on the 48 lines.  Appendix~A of~\cite{Bright:thesis}
lists the result of this computation for all possible Galois groups
$\Gal(K/\Q)$.  In particular, case A222 there is where $K/\Q$ is of
maximal degree 256, so that the coefficients $a_i$ are ``as general as
possible''.  In that case, $\H^1(K/\Q, \Pic X_K)$ is computed to be of
order $2$.

We claim that $[K:\Q] = |H|$, so that condition~\ref{it:maximal} of
Theorem~1.1 implies that $X$ falls into case A222
of~\cite{Bright:thesis}.  Kummer theory shows that $[K:\Q(i)] = |H'|$,
where $H'$ is the subgroup of $\Q(i)^\times / (\Q(i)^\times)^4$
generated by $4$ and the $a_j/a_0$.  The kernel of the natural map $r \colon
\Q^\times / (\Q^\times)^4 \to \Q(i)^\times / (\Q(i)^\times)^4$ is of
order $2$, generated by the class of $-4$; so $H' = r(H)$, and
\[
[K:\Q] = [K:\Q(i)][\Q(i):\Q] = 2|H'| = |H| \text{.}
\]
\end{proof}

\begin{remark}
Further cohomology calculations could show that, under the hypothesis
that $|H|=256$, the algebra $\alg$ of Lemma~\ref{lem:alg} represents
the non-trivial class in $\Br X / \Br \Q$.  However, there is no need
for this, since in our situation non-triviality is also implied by the
following lemma.
\end{remark}

\begin{lemma} \label{lem:obs}
Let $X$ be a diagonal quartic surface over $\Q$ given by
equation~\eqref{eq:surface}.  Let $\alg$ be the Azumaya algebra
described in Lemma~\ref{lem:alg}.  Suppose that $p$ is an odd prime such
that:
\begin{enumerate}
\item \label{it:cone} $p$ divides precisely one of the coefficients
  $a_0, a_1, a_2, a_3$, and does so to an odd power;
\item \label{it:locsol} $X(\Q_p)$ is not empty;
\item \label{it:special} if $p \in \{ 7, 11, 17, 41 \}$, then the
  reduction of $X$ modulo $p$ is not equivalent to the cone over the
  quartic curve $x^4 + y^4 + z^4 = 0$.
\end{enumerate}
Then $\inv_p \alg(Q)$ takes both values $0$ and $\frac{1}{2}$ for $Q
\in X(\Q_p)$.  In particular, the class of $\alg$ in $\Br X / \Br \Q$
is non-trivial, and $\alg$ gives no Brauer--Manin obstruction to the
existence of rational points on $X$.
\end{lemma}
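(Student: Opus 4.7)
My plan is to reduce the computation of $\inv_p\alg(Q)$ at the prime $p$ of condition~(\ref{it:cone}) to a Legendre-symbol evaluation on the reduction $\bar X\bmod p$, and then to show that both square and non-square residues are attained. Permuting indices, I assume that $a_3$ is the unique coefficient divisible by $p$, with $m:=v_p(a_3)$ odd and hence (since no $a_i$ is divisible by a fourth power) in $\{1,3\}$, so that $v_p(\theta)$ is odd. A short valuation argument with the equation $\sum a_iX_i^4=0$ rules out $Q\in X(\Q_p)$ whose reduction is the vertex $[0{:}0{:}0{:}1]$ of the cone $\bar X$: such a $Q$ would satisfy $v_p(a_3x_3^4)=m<4\leq v_p(\sum_{i<3}a_ix_i^4)$. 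Hence every $Q\in X(\Q_p)$ reduces to a smooth point of $\bar X$.

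I next choose $P$. The reduction $\bar Y\colon\bar a_0Y_0^2+\bar a_1Y_1^2+\bar a_2Y_2^2=0$ is a smooth conic inside the hyperplane $\{Y_3=0\}$, hence has an $\F_p$-point, which lifts by Hensel to $P\in Y(\Q_p)$ with $y_3=0$ and $(y_0,y_1,y_2)\in\Z_p^3\setminus(p\Z_p)^3$. Then
\[
f=a_0y_0X_0^2+a_1y_1X_1^2+a_2y_2X_2^2.
\]
For $Q\in X(\Q_p)$ with $\bar f(\bar Q)\neq 0$, I pick $i\in\{0,1,2\}$ with $\bar x_i\in\F_p^\times$, which exists because $\bar Q$ is not the vertex. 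The quotient $f(Q)/x_i^2$ is then a $p$-adic unit congruent to $\bar f(\bar Q)/\bar x_i^2$, and the standard Hilbert-symbol formula at an odd prime, using only that $v_p(\theta)$ is odd and that $\bar x_i^2$ is a square, gives
\[
\inv_p\alg(Q)=\begin{cases}0&\text{if }\bar f(\bar Q)\in(\F_p^\times)^2,\\ \tfrac12&\text{if }\bar f(\bar Q)\in\F_p^\times\setminus(\F_p^\times)^2.\end{cases}
\]
By the independence remark after Lemma~\ref{lem:alg}, the difference of two such invariants does not change when $P$ is rescaled, so it suffices to produce two residue classes $\bar Q_1,\bar Q_2\in\bar X(\F_p)$, smooth on $\bar X$ (and hence liftable to $\Q_p$ by Hensel), on which $\bar f$ takes a square and a non-square value respectively.

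Since $\bar f(\bar Q)$ depends only on $[\bar x_0:\bar x_1:\bar x_2]\in\bar C(\F_p)$, where $\bar C\colon\bar a_0X_0^4+\bar a_1X_1^4+\bar a_2X_2^4=0$ is the plane quartic cut out in $\{X_3=0\}$, the task reduces to showing that the map $\bar Q\mapsto(\bar f(\bar Q)/p)$ is non-constant on $\bar C(\F_p)$. A divisor computation gives $(\bar f\vert_{\bar C})=2\phi_0^*\bar P$, where $\phi_0\colon\bar C\to\bar Y\cap\{Y_3=0\}$ is the squaring map and $\bar P$ is the reduction of $P$; generically $\phi_0^*\bar P$ is not a line section of $\bar C$, so $\bar f\vert_{\bar C}$ is not a square in $\F_p(\bar C)^\times$. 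The double cover $\tilde C\colon z^2=\bar f\vert_{\bar C}$ is then geometrically irreducible, and Weil's estimate applied to $|\tilde C(\F_p)|$ against $|\bar C(\F_p)|$ forces both Legendre values to occur as soon as $p$ exceeds an explicit threshold.

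The main obstacle is the small-prime analysis below this threshold, where the Weil bound alone is not sharp enough and one must verify directly that $\bar f$ takes both values on $\bar C(\F_p)$. The set $\{7,11,17,41\}$ in condition~(\ref{it:special}) records exactly the small primes at which this direct check can fail for the Fermat-type reduction $\bar C\sim\{x^4+y^4+z^4=0\}$; in every other case (and for all larger $p$) explicit witnesses $\bar Q_1,\bar Q_2$ are produced by hand. Once both values $0$ and $\tfrac12$ of $\inv_p\alg$ are realised on $X(\Q_p)$, the class of $\alg$ in $\Br X/\Br\Q$ is non-trivial, and given any adelic point of $X$ one can modify its $p$-component to make the sum $\sum_v\inv_v\alg(P_v)$ vanish, so $\alg$ contributes no Brauer--Manin obstruction.
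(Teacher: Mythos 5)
Your proposal follows essentially the same route as the paper: reduce $\inv_p\alg(Q)$ to a Legendre symbol via the tame Hilbert-symbol formula, view everything as data on the reduced quartic curve $C$ in the hyperplane section, pass to the double cover $z^2=\bar f$, invoke geometric irreducibility and Hasse--Weil for large $p$, and finish with a direct check for small $p$. Your variant of choosing $P$ by lifting an $\F_p$-point of the conic section $\{Y_3=0\}$ rather than starting from a $\Q_p$-point of $X$ is a clean simplification, and the observation that no $Q\in X(\Q_p)$ reduces to the vertex of the cone is the same valuation argument.

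There are two places where you gloss over details the paper handles explicitly. First, you assert ``generically $\phi_0^*\bar P$ is not a line section of $\bar C$''; but ``generically'' hides the edge case in which one coordinate of $\bar P$ is zero (this can be unavoidable, e.g.\ for $p=3$), and the paper resolves it by replacing $\bar P$ with a linearly equivalent point over an extension of $\F_p$. Since the claim that $\bar f\vert_{\bar C}$ is not a square in $\F_p(\bar C)^\times$ is exactly what makes $\tilde C$ geometrically irreducible, this needs to be made precise, not left to genericity. Second, the affine model $z^2=\bar f\vert_{\bar C}$ is singular over the zeros of $\bar f$; you need to pass to the normalisation (genus $5$, étale over $C$ because $(\bar f\vert_{\bar C})$ is even), and then count the $\le 8$ exceptional points back in when applying Hasse--Weil, which is where the threshold $p>114$ comes from. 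Relatedly, comparing $\#\tilde C(\F_p)$ to $\#\bar C(\F_p)$ in one inequality is not quite enough: one must run the argument separately for both twists (i.e.\ for $c$ a square and a nonsquare), as the paper does with the family $E_c$, in order to guarantee that each Legendre value is actually attained.
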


\begin{remark}
Condition~\ref{it:cone} implies, in particular, that $\theta = a_0 a_1
a_2 a_3$ is not a square.
\end{remark}

\begin{remark}
Condition~(\ref{it:locsol}), that $X(\Q_p)$ be non-empty, is automatic
for $p \ge 37$: for the reduction of $X$ modulo $p$ is a cone over a
smooth quartic curve, which has a rational point by the Hasse--Weil
bound.  For $p < 37$, one can easily check by a computer search that
the only smooth diagonal quartic curves over $\F_p$ lacking a rational
point are the following (up to equivalence):
\begin{itemize}
\item $x^4 + y^4 + z^4 = 0$ for $p=5$ or $29$;
\item $x^4 + y^4 + 2z^4 = 0$ for $p=5$ or $13$.
\end{itemize}
\end{remark}

\begin{proof}
Suppose, without loss of generality, that $p \mid a_0$.

In constructing $\alg$, as described in Lemma~\ref{lem:alg}, we may
choose any point $P \in Y(\Q)$ to start from.  In particular, we may
choose $P$ such that $y_1, y_2, y_3$ are not all divisible by $p$, for
the following reason.  Recall that we have assumed the coefficients
$a_i$ to be fourth-power-free, so that in particular $v_p(a_0) \le 3$.
The original surface $X$ is locally soluble at $p$, so let $[x_0, x_1,
  x_2, x_3] \in X(\Q_p)$ with the $x_i$ $p$-adic integers, not all
divisible by $p$.  If $x_1, x_2, x_3$ were all divisible by $p$, then
we would have $v_p( a_1 x_1^4 + a_2 x_2^4 + a_3 x_3^4 ) \ge 4$,
whereas $v_p(a_0 x_0^4) \le 3$, and so the defining
equation~\eqref{eq:surface} could not be satisfied.  Now $[ x_0^2,
  x_1^2, x_2^2, x_3^2]$ is a point of $Y(\Q_p)$, with $x_1, x_2, x_3$
not all divisible by $p$, and so by weak approximation $Y(\Q)$
contains a point with the desired property.

Looking at the equation of $Y$ shows that, in fact, at most one of
$y_1, y_2, y_3$ can be divisible by $p$.  It would clarify the rest of
the argument if none of $y_1, y_2, y_3$ were divisible by $p$, and the
reader is encouraged to imagine this to be the case; but unfortunately
if $p=3$ it is not always possible.

Starting from such a $P$, we obtain $f$ as in~\eqref{eq:fdef} where
the coefficient of $X_0^2$ is divisible by $p$, but at least one of
the other coefficients is not divisible by $p$.  The reduction
$\tilde{f}$ of $f$ modulo $p$ is a non-zero diagonal quadratic form on
$\P^3_{\F_p}$, with no term in $X_0^2$.

We now reduce to a problem over $\F_p$.  Let $\tilde{X}$ denote the
reduction of $X$ modulo $p$.  Let $\tilde{Q} \in \tilde{X}(\F_p)$ be a
smooth point; then, by Hensel's Lemma, $\tilde{Q}$ lifts to a point $Q
\in X(\Q_p)$.  Suppose that $\tilde{f}(\tilde{Q}) \neq 0$, and that
$X_0(Q) \neq 0$.  Since $p$ divides $\theta$ to an odd power, the
description of the Hilbert symbol at~\cite[III, Theorem~1]{Serre:CA}
gives
\begin{equation} \label{eq:legendre}
\inv_p \alg(Q) = (\theta, f(Q)/X_0^2)_p = (\theta, f(Q))_p
= \leg{\tilde{f}(\tilde{Q})}{p} \text{.}
\end{equation}
Here the leftmost equality is abusing notation slightly, since
$\inv_p$ traditionally takes values in $\{0, \frac{1}{2}\}$ whereas
the Hilbert symbol $(\cdot, \cdot)_p$ takes values in $\{\pm 1\}$.
Since $f$ is of degree $2$, the value $f(Q)$ is defined only up to
squares, and likewise $\tilde{f}(\tilde{Q})$, but the expressions
in~\eqref{eq:legendre} are well defined.  The requirement that $X_0(Q)
\neq 0$ is superfluous, since we can always replace $\alg$ by the
isomorphic algebra $(\theta, f/X_i^2)$ for some $i \neq 0$ to show
that the conclusion of~\eqref{eq:legendre} still holds.

Now let $C$ be the smooth quartic curve in $\P^2_{\F_p}$ defined by
\[
C : \tilde{a}_1 X_1^4 + \tilde{a}_2 X_2^4 + \tilde{a}_3 X_3^4 = 0
\text{.}
\]
This is, of course, the same as the defining equation of $\tilde{X}$,
but now considered as an equation in only three variables.  Any point
of $X(\Q_p)$ reduces to give us a point of $\tilde{X}(\F_p)$ and
hence, forgetting the $X_0$-coordinate, of $C(\F_p)$.  Since the
diagonal quadratic form $\tilde{f}$ has no term in $X_0^2$, we can
consider it as a form on $C$.  Note that $\tilde{f}$ depends only on
$C$, not on our original variety $X$, since we may also construct
$\tilde{f}$ as follows: the point $\tilde{P} = (\tilde{y}_1,
\tilde{y}_2, \tilde{y}_3)$ lies on the smooth plane conic $Z:
\tilde{a}_1 Y_1^2 + \tilde{a}_2 Y_2^2 + \tilde{a}_3 Y_3^2 = 0$, and
the linear form $\tilde{g}$ defines the tangent line to $Z$ at
$\tilde{P}$.  Write $\tilde\phi$ for the map from $C$ to $Z$ given by
$Y_i = X_i^2$; pulling $\tilde{g}$ back under $\tilde\phi$ gives the
form $\tilde{f}$.  In particular, this shows that the divisor of
$\tilde{f}$ is a multiple of $2$: for we have $(\tilde{g}) = 2
\tilde{P}$ and therefore $(\tilde{f}) = 2( \tilde\phi^* \tilde{P} )$.
The geometric picture (which is only accurate as long as none of $y_1,
y_2, y_3$ are divisible by $p$) is that $\tilde{f}$ defines a plane
conic which is tangent to $C$ at four distinct points, which are the
four points mapping to $\tilde{P}$ under $\tilde\phi$.

Note also that the divisor $(\tilde{f})/2 = \tilde\phi^* \tilde{P}$ is
not a plane section: as long as none of $y_1, y_2, y_3$ are divisible
by $p$, this divisor consists of four distinct points of the form
$[\pm \alpha, \pm \beta, \pm \gamma]$, with $\alpha, \beta, \gamma$
all non-zero; in characteristic $\neq 2$, such points can never be
collinear.  If one of $y_1, y_2, y_3$ is divisible by $p$, then we
move to an extension of $\F_p$, replace $\tilde{P}$ by some
$\tilde{P}'$ for which the above proof does work, and observe that
$\tilde{P}'$ is linearly equivalent to $\tilde{P}$, so $\tilde\phi^*
\tilde{P}'$ is linearly equivalent to $\tilde\phi^* \tilde{P}$, but
$\tilde\phi^* \tilde{P}'$ is not a plane section; therefore neither
can $\tilde\phi^* \tilde{P}$ be a plane section.

By~\eqref{eq:legendre}, it remains to show that the quadratic form
$\tilde{f}$ takes both square and non-square non-zero values on
$C(\F_p)$.  Equivalently, we need to show that, for any $c \in
\F_p^\times / (\F_p^\times)^2$, the equations
\begin{equation} \label{eq:cover}
T^2 = c \tilde{f}(X_1, X_2, X_3), \qquad 
\tilde{a}_1 X_1^4 + \tilde{a}_2 X_2^4 + \tilde{a}_3 X_3^4 = 0
\end{equation}
have simultaneous solutions with $T$ non-zero.  These equations define
a double cover $E_c$ of $C$.  As given, $E_c$ is singular at the
points with $T=0$ (which are the points lying over the zeros of
$\tilde{f}$), so we consider its normalisation $E'_c \to E_c$.  This
is a smooth double cover of $C$ with the following properties:
\begin{itemize}
\item the morphism $E'_c \to E_c$ is an isomorphism outside $8$
  (geometric) points lying over the points of $E_c$ with $T=0$;
\item since the divisor $(\tilde{f})$ is a multiple of $2$, the
  quadratic extension of function fields $\F_p(E_c)/\F_p(C)$ is
  unramified and hence so is $E'_c \to C$;
\item since the divisor $(\tilde{f})/2$ is not a plane section, this
  extension contains no non-trivial extension of $\F_p$ and so $E'_c$
  is geometrically irreducible.
\end{itemize}

By the Riemann--Hurwitz formula, $E'_c$ has genus $5$.  If $p > 114$,
then the Hasse--Weil bounds show that $E'_c$ has strictly more than
$8$ points over $\F_p$, and so $E_c$ has at least one point with $T
\neq 0$, completing the argument in this case.

It remains to check the cases with $p < 114$.  For each prime $p$, we
can take $\tilde{a}_1 = 1$ and let $\tilde{a}_2, \tilde{a}_3$ run
through $\F_p^\times / (\F_p^\times)^4$.  A straightforward computer
search shows that the only cases when some $E_c$ fails to have points
are those listed in the statement of the lemma.
\end{proof}

\begin{remark}
With a slightly longer argument, we could avoid having to throw away
the points on $E_c$ with $T=0$.  By taking two different $\tilde{P}$s
to start with, we obtain two different $\tilde{f}$s with no common
zeros on $C$.  The ratio of the $\tilde{f}$s is a square, and the
corresponding equations~\eqref{eq:cover} patch together to give a
description of $E'_c$ with no singularities.  The sophisticated reader
will recognise $E'_c$ as a torsor under $\boldsymbol{\mu}_2$
corresponding to the class $(\tilde{f}/X_0^2) \in \Pic C[2]$.
\end{remark}

\section{A counterexample}

In this section we present a counterexample showing that
Theorem~\ref{thm:main} can fail when condition~\ref{it:reduction} is
not met.  We begin by giving an infinite family of diagonal quartics
satisfying conditions~\ref{it:els}--\ref{it:maximal} of
Theorem~\ref{thm:main}, but not condition~\ref{it:reduction}.

\begin{lemma} \label{lem:eg1}
Let $p,q$ be odd primes satisfying the following properties:
\begin{itemize}
\item $p \equiv q \equiv 3 \pmod 4$;
\item $p$ and $q$ are both fourth powers modulo $17$;
\item $\leg{p}{q}=1$.
\end{itemize}
Then the diagonal quartic surface
\[
X_0^4 + q X_1^4 = p X_2^4 + 17 pq X_3^4
\]
satisfies conditions~\ref{it:els}--\ref{it:maximal} of
Theorem~\ref{thm:main}.
\end{lemma}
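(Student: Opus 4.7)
The argument naturally splits by condition; the hypotheses are tailored to make each hold, with the local solubility condition~(\ref{it:els}) the main work.

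For (\ref{it:235}) and (\ref{it:maximal}): the group $H$ is generated modulo fourth powers by $-1, 4, a_1/a_0, a_2/a_0, a_3/a_0$. Absorbing signs via $-1 \in H$ and dividing out, these simplify to $\{-1, 4, 17, p, q\}$. Since the fourth powers modulo $17$ are $\{1, 4, 13, 16\}$, the hypotheses force $p, q \notin \{2, 3, 5, 17\}$ and $p \neq q$. The five generators then lie over pairwise disjoint data: $-1$ and $4$ give $\Z/2\Z$ factors from the sign and the prime $2$, while $17, p, q$ give $\Z/4\Z$ factors at three distinct odd primes. Any relation $(-1)^a 4^b 17^c p^d q^e \in (\Q^\times)^4$ forces $a, b$ even (from sign and $v_2$) and $c, d, e \equiv 0 \pmod 4$, hence all exponents zero in $\Z/2\Z \oplus \Z/2\Z \oplus (\Z/4\Z)^3$. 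This gives $|H| = 256$, and the same valuation analysis shows $v_\ell(\ell) = 1$ cannot be achieved by any element of $H$ for $\ell \in \{2, 3, 5\}$.

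For (\ref{it:els}) I argue place-by-place. The real place is immediate from mixed signs. At an odd prime $\ell \notin \{17, p, q\}$ the reduction is smooth; one exhibits an $\F_\ell$-point by applying Hasse--Weil bounds to a hyperplane section $X_i = 0$ (plus direct inspection for small $\ell$) and lifts by Hensel. At the three bad odd primes the hypotheses do real work. At $\ell = 17$, since $p$ is a fourth power modulo $17$ it is a fourth power in $\Z_{17}$ by Hensel, and $[p^{1/4}:0:1:0]$ lies in $X(\Q_{17})$. At $\ell = q$, the condition $q \equiv 3 \pmod 4$ makes $|\F_q^\times| \equiv 2 \pmod 4$, so fourth powers coincide with squares; the hypothesis $\leg{p}{q} = 1$ then says $p$ is a fourth power in $\Z_q$ (Hensel again), and the same point works. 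At $\ell = p$, quadratic reciprocity combined with $p \equiv q \equiv 3 \pmod 4$ yields $\leg{q}{p} = -\leg{p}{q} = -1$, hence $\leg{-q}{p} = +1$; $-q$ is therefore a fourth power in $\Z_p$ by the same order-parity argument, and since $p - q \equiv -q \pmod p$, so is $p - q$, giving the $\Q_p$-point $[(p-q)^{1/4}:1:1:0]$.

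The step I expect to be the main obstacle is local solubility at $\ell = 2$. Since $u^4 \equiv 1 \pmod{16}$ for every odd $u \in \Z_2^\times$, any $\Q_2$-point, rescaled to integral coordinates not all in $2\Z_2$, must satisfy $\epsilon_0 + q\epsilon_1 - p\epsilon_2 - 17pq\epsilon_3 \equiv 0 \pmod{16}$, where $\epsilon_i \in \{0,1\}$ records whether $X_i$ is a $2$-adic unit. Since $17 \equiv 1 \pmod{16}$, this is a finite condition on $(p, q) \pmod{16}$; I would enumerate the allowed residues (restricted by $p \equiv q \equiv 3 \pmod 4$) and exhibit a non-trivial $(\epsilon_i)$ in each case. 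A perturbation $X_i \mapsto X_i + 2c$ then upgrades the approximate solution to the congruence $\pmod{32}$ needed for multivariate Hensel's lemma, the partial derivatives $4 a_i X_i^3$ having $2$-adic valuation $2$ at a point with all $X_i$ units. The bookkeeping is elementary but lengthy, and represents the technical heart of the proof.
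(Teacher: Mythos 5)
Your handling of conditions~\ref{it:235} and~\ref{it:maximal} is a fleshed-out version of what the paper dismisses in one line as ``clear, since there are no non-obvious relations between the generators''; your argument via valuations at $2,17,p,q$ and the sign is fine. Your treatment of $v=\R$ and of $\ell=p,q,17$ matches the paper's in substance (the paper writes $\leg{-q}{p}=-\leg{q}{p}=\leg{p}{q}=1$ and you use the same reciprocity step), and your Hasse--Weil-plus-inspection approach to the remaining odd primes of good reduction plays the same role as the paper's computer search; the paper additionally singles out $\ell=5$ and checks $q\equiv -p\equiv -17pq\equiv 1\pmod 5$ is impossible, which you should verify falls under your ``direct inspection'' bucket.

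The substantive issue is the place you yourself flag as the main obstacle, $\ell=2$. You are right that this is the heart of the matter --- in fact the paper's own proof never addresses $v=2$ at all --- but your sketch asserts, without carrying out the enumeration, that ``a non-trivial $(\epsilon_i)$'' can always be exhibited. That assertion is false. With coefficients $(1,q,-p,-17pq)\equiv(1,q,-p,-pq)\pmod{16}$ and $\epsilon_i\in\{0,1\}$ recording parity of $X_i$, one needs $\epsilon_0+q\epsilon_1-p\epsilon_2-pq\epsilon_3\equiv 0\pmod{16}$. Running through the sixteen classes of $(p,q)\pmod{16}$ with $p\equiv q\equiv 3\pmod 4$, the congruence has a non-trivial solution except precisely when $p\equiv 7\pmod 8$ and $q\equiv 3\pmod 8$ (i.e.\ $(p,q)\equiv(7,3),(7,11),(15,3),(15,11)\pmod{16}$). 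Those residues are compatible with the other hypotheses of the lemma: for instance $p=727$, $q=307$ satisfy $p\equiv q\equiv 3\pmod 4$, $727\equiv 13$ and $307\equiv 1\pmod{17}$ (both fourth powers), and $\leg{727}{307}=\leg{113}{307}=\leg{307}{113}=\leg{81}{113}=1$; but $(727,307)\equiv(7,3)\pmod{16}$, and the table shows the equation has no primitive solution mod~$16$, hence $X(\Q_2)=\emptyset$. So the lemma as stated is incorrect, the paper's proof has a genuine gap at $v=2$, and your proof cannot be completed along the announced lines without adding a hypothesis ruling out $p\equiv 7\pmod 8$, $q\equiv 3\pmod 8$ (the concrete example $p=103$, $q=47$ used in the subsequent Proposition happens to avoid the bad class, since $47\equiv 7\pmod 8$). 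When you carry out the mod-$16$ enumeration, report which classes survive and adjust the statement accordingly; that is exactly the kind of ``bookkeeping'' that cannot be waved away here.
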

\begin{proof}
Conditions~\ref{it:235} and~\ref{it:maximal} are clear, since there
are no non-obvious relations between the generators for $H = \langle
-1, 4, p, q, 17 \rangle$.  It remains to prove local solubility.  For
$\mathbb{R}$ this is clear.  For primes $\ell \ge 23$ of good
reduction, the Weil conjectures guarantee a point modulo $\ell$ and
hence a point over $\Q_\ell$ by Hensel's Lemma.  At
$\ell=3,7,11,13,17,19$, a computer search shows that every smooth
diagonal quartic surface has a rational point modulo $\ell$.  At
$\ell=5$, the only smooth diagonal quartic surface lacking a point
over $\F_5$ is the Fermat quartic $X_0^4 + X_1^4 + X_2^4 + X_3^4 = 0$,
so for local solubility to fail we would need $q \equiv -p \equiv
-17pq \equiv 1 \pmod 5$, which is impossible.

Since $p$ and $q$ are both congruent to $3 \pmod 4$, the fourth powers
modulo $p$ or $q$ are the same as the squares.  At $q$, the condition
$\leg{p}{q}=1$ guarantees local solubility; at $p$, we have
$\leg{-q}{p} = -\leg{q}{p} = \leg{p}{q}=1$ and so again the surface is
locally soluble.  Finally, at $17$, the fact that $p$, hence $-p$, and
$q$ are fourth powers means that the reduction at $17$ is isomorphic
to the cone over the Fermat quartic curve $x^4+y^4+z^4=0$, which has
smooth points over $\F_{17}$.
\end{proof}

However, choosing $p$ and $q$ to be fourth powers modulo $17$ means
that condition~\ref{it:reduction} of Theorem~\ref{thm:main} is not
satisfied.

We will show that the Azumaya algebra $\alg$ described in
Section~\ref{sec:alg} can give an obstruction to the existence of
rational points on $X$, at least for some values of $p$ and $q$.
Recall that multiplying the form $f$ by a constant changes $\alg$ by a
constant algebra.  To avoid contributions at unnecessary primes, we
choose our representation $\alg = (\theta,f)$ such that the
coefficients of $f$ are integers with no common factor.  (This is
equivalent to writing our point $P=[y_0, y_1, y_2, y_3]$ with the
$y_i$ integers having no common factor.)

\begin{lemma} \label{lem:eg2}
Let $X$ be the surface of Lemma~\ref{lem:eg1}, and let $\alg =
(\theta,f)$ be normalised as above.  Then, for all places $v \neq 17$,
$\inv_v \alg(Q)=0$ for $Q \in X(\Q_v)$.  The invariant is constant on
$X(\Q_{17})$.
\end{lemma}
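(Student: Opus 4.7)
The strategy is to compute $\theta = a_0 a_1 a_2 a_3 = 17 p^2 q^2$, so that modulo squares $\theta \equiv 17$, and hence $\inv_v \alg(Q) = (17, f(Q)/X_i(Q)^2)_v$ is a Hilbert symbol (with $i$ chosen so that $X_i(Q) \neq 0$). My plan is to dispose of the places one at a time.

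Three families of places fall at once. At $v = \infty$, $17 > 0$ gives $(17, \cdot)_\infty = 1$. At $v = 2$, the congruence $17 \equiv 1 \pmod 8$ makes $17$ a square in $\Q_2$, so the symbol is trivial. For an odd prime $\ell \notin \{p, q, 17\}$, $X$ has good reduction and $17 \in \Z_\ell^\times$; by a standard argument using the Azumaya property, $\alg$ extends to the smooth integral model $\mathcal{X}/\Z_\ell$, giving $\alg(Q) \in \Br \Z_\ell = 0$ for every $Q \in X(\Q_\ell) = \mathcal{X}(\Z_\ell)$. This reduces the problem to the three primes $\{p, q, 17\}$.

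At $\ell = p$, and symmetrically at $\ell = q$, the key observation is that $p$ divides \emph{two} of the coefficients of~\eqref{eq:surface}, namely $a_2 = -p$ and $a_3 = -17pq$, so condition~(\ref{it:cone}) of Lemma~\ref{lem:obs} fails and that lemma does not apply. Instead, I would pick an explicit rational point $P \in Y(\Q)$ tailored to $p$, compute $f(Q) \bmod p$ in integral coordinates for $Q \in X(\Q_p)$, and verify directly that $(17, f(Q))_p = 1$. The computation would combine the hypotheses $p \equiv q \equiv 3 \pmod 4$ and $\leg{p}{q} = 1$ (whence $\leg{q}{p} = -1$ by quadratic reciprocity) with the normalisation that the coefficients of $f$ are coprime integers.

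Finally, at $\ell = 17$, the prime $17$ divides only $a_3 = -17pq$ and to the first power, so we are in the setting of Lemma~\ref{lem:obs}. However, because $p$ and $q$ are fourth powers modulo $17$ and $-1 \equiv 2^4 \pmod{17}$, the reduction $\tilde X$ is equivalent to the cone over the Fermat quartic $x^4 + y^4 + z^4 = 0$, so condition~(\ref{it:special}) fails. Using~\eqref{eq:legendre}, $\inv_{17}\alg(Q) = \leg{\tilde f(\tilde Q)}{17}$, and the computer search at the end of the proof of Lemma~\ref{lem:obs} shows that in the Fermat case at $p = 17$ one of the covers $E_c$ of~\eqref{eq:cover} has no $\F_{17}$-points with $T \neq 0$; thus $\tilde f$ takes values in a single square class on $C(\F_{17})$ and $\inv_{17}\alg$ is constant. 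I expect the principal obstacle to be the direct Hilbert symbol computation at $p$ and $q$: because each of these primes divides two coefficients, the reduction there is not a cone over a smooth curve and the smooth-model argument cannot be used, so one has to work by hand, choosing $P$ carefully and applying the reciprocity hypotheses.
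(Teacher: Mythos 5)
Your decomposition into places and your identification of the difficult points match the paper, but there is a genuine gap at the primes $p$ and $q$. You correctly observe that $\theta \equiv 17$ modulo squares and you use this to kill the places $\infty$ (where $17>0$) and $2$ (where $17\equiv 1\pmod 8$ makes $17$ a square in $\Q_2$). But you stop short of applying the very same observation to $p$ and $q$: by quadratic reciprocity (here $17\equiv 1\pmod 4$) we have $\leg{17}{p}=\leg{p}{17}$, and the hypothesis of Lemma~\ref{lem:eg1} that $p$ is a fourth power modulo $17$ forces $\leg{p}{17}=1$, so $17$ is a square in $\Q_p$; likewise for $q$. Hence the Hilbert symbol $(17,\cdot)_p$ is identically trivial and there is nothing to compute. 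Instead, you propose an \emph{ad hoc} ``direct Hilbert symbol computation'' at $p$ and $q$ and flag it yourself as ``the principal obstacle''; but this is a plan rather than a proof, you try to invoke the hypotheses $p\equiv q\equiv 3\pmod 4$ and $\leg{p}{q}=1$ (which are not the relevant ones here) and never bring in the fourth-power-mod-$17$ hypothesis that actually does the job. So the step you regard as the hard part has a one-line solution you have missed, and as written your argument does not establish the claim at $p$ and $q$.

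Two smaller comments. First, the good-reduction argument needs the normalisation of $f$ (that the coefficients are coprime integers) to guarantee that $\alg$ extends over $\Z_\ell$; you gesture at ``a standard argument using the Azumaya property'' but it is worth noting that this is exactly where the normalisation is used. Second, your treatment at $17$ is essentially the paper's, though the paper makes it fully explicit by choosing $\tilde{P}=[5,5,1,0]$ (so $\tilde f=5X_0^2+5X_1^2+X_2^2$ is never zero on $\F_{17}$-points of the quartic) and checking squareness at the finitely many $\F_{17}$-points of the cone; your appeal to the computer search at the end of Lemma~\ref{lem:obs} reaches the same conclusion and is acceptable, provided you also argue that $\tilde f$ can be arranged to be nonvanishing on $C(\F_{17})$ so that constancy of the square class actually follows.
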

\begin{proof}
Our normalisation of $f$ ensures that, at all places of good reduction
for $X$, the algebra $\alg$ also has good reduction and so the
invariant is zero at these places.
See~\cite[Corollary~4]{Bright:MPCPS-2007} for one explanation of why
this is true.

The primes of bad reduction for $X$ are $2,17,p,q$.  Observe that
$\theta = 17p^2q^2$ is a square in $\R$, $\Q_2$, $\Q_p$ and $\Q_q$
(the last two follow by quadratic reciprocity from the fact that $-p$
and $q$ are fourth powers, hence squares, modulo $17$).  So the
conclusion is true at each of these places.  Our normalisation of
$f$ ensures that, at all places of good reduction for $X$, the algebra
$\alg$ also has good reduction and so the invariant is zero at these
places.

At $17$, the argument used in the proof of Lemma~\ref{lem:obs} shows
that $\inv_{17} \alg(Q)$ is constant for $Q \in X(\Q_v)$.  We give the
details.  The reduction of $X$ modulo $17$ is isomorphic to the cone
$X_0^4 + X_1^4 + X_2^4 = 0$.  The corresponding quadric is $Y_0^2 +
Y_1^2 + Y_2^2=0$.  To show simply that the invariant is constant, we
can change $\alg$ by a constant algebra and so may as well replace $P$
by any point which is convenient.  So pick $\tilde{P}=[5,5,1,0]$ and
hence $\tilde{f} = 5 X_0^2 + 5 X_1^2 + X_2^2$.  (This choice of
$\tilde{P}$ has the advantage that it does not lift to a point of the
quartic, so $\tilde{f}$ is never zero on $\F_{17}$-rational points of
the quartic.)  Now the solutions to $X_0^4 + X_1^4 + X_2^4$ over
$\F_{17}$ are all of the form $[\epsilon,1,0]$, $[\epsilon,0,1]$ or
$[0,\epsilon,1]$ where $\epsilon^4=-1$, and it turns out that
evaluating $\tilde{f}$ at any of these points gives a square in
$\F_{17}$.
\end{proof}

We do not yet know whether the invariant at $17$ will be $0$ or
$\frac{1}{2}$.  If is it $0$, then there is no Brauer--Manin
obstruction on $X$ (not even to weak approximation).  If it is
$\frac{1}{2}$, then there is a Brauer--Manin obstruction to the
existence of rational points.  To determine which, we only need to
evaluate the invariant at one point.  A simple calculation reveals
that the first example satisfying the conditions of
Lemma~\ref{lem:eg1} does indeed give a counterexample to the Hasse
principle:

\begin{proposition}
Let $X$ be the diagonal quartic surface given by
\[
X_0^4 + 47 X_1^4 = 103 X_2^4 + (17\times 47\times 103) X_3^4 .
\]
Then $X$ has points in each completion of $\Q$, but the algebra $\alg$
gives a Brauer--Manin obstruction to the existence of a rational point
on $X$.
\end{proposition}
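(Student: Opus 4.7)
The plan is to verify the hypotheses of Lemmas~\ref{lem:eg1} and~\ref{lem:eg2} with $p=103$ and $q=47$, after which the proposition reduces to computing a single local invariant at $17$.

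First I would check the three conditions of Lemma~\ref{lem:eg1}: both $47$ and $103$ are congruent to $3$ modulo $4$; $103\equiv 1\pmod{17}$ is trivially a fourth power, while $47\equiv 13\equiv 3^4\pmod{17}$; and since $103\equiv 9\pmod{47}$ quadratic reciprocity gives $\leg{103}{47}=1$. Lemma~\ref{lem:eg1} then supplies local solubility everywhere and conditions~\ref{it:els}--\ref{it:maximal} of Theorem~\ref{thm:main}.

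Next, Lemma~\ref{lem:eg2} applied with the Azumaya algebra $\alg=(\theta,f)$ normalised so that $f$ has coprime integer coefficients (recall $\theta=17\cdot 47^2\cdot 103^2$) gives $\inv_v\alg(Q)=0$ for all $v\neq 17$ and all $Q\in X(\Q_v)$, and tells us that $\inv_{17}\alg(Q)$ takes a single constant value $c\in\{0,\tfrac12\}$ on $X(\Q_{17})$. The proposition is therefore equivalent to showing $c=\tfrac12$.

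To pin down $c$ I would carry out the following explicit computation. A direct search produces an integer point $P=[y_0,y_1,y_2,y_3]$ on the quadric $Y: Y_0^2+47Y_1^2=103Y_2^2+17\cdot 47\cdot 103\,Y_3^2$ with $\gcd(y_0,y_1,y_2,y_3)=1$; this fixes
\[
f = y_0 X_0^2 + 47 y_1 X_1^2 - 103 y_2 X_2^2 - 17\cdot 47\cdot 103\, y_3 X_3^2.
\]
Then I would pick a convenient smooth $\F_{17}$-point $\tilde Q$ on the reduction $\tilde X : X_0^4+X_1^4+X_2^4=0$ for which $\tilde f(\tilde Q)\neq 0$, Hensel-lift it to $Q\in X(\Q_{17})$, and use the Hilbert-symbol identity
\[
\inv_{17}\alg(Q) = (\theta,f(Q))_{17} = \leg{\tilde f(\tilde Q)}{17}
\]
established as~\eqref{eq:legendre} in the proof of Lemma~\ref{lem:obs} (valid here since $17$ divides $\theta$ to odd power, regardless of the exclusion in condition~\ref{it:special}). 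This collapses $c$ to a single Legendre symbol modulo $17$.

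The main obstacle is purely computational: one must locate an integer point on the quadric $Y$ and then evaluate a Legendre symbol. I expect the outcome to be $c=\tfrac12$; combined with Lemma~\ref{lem:eg2}, global reciprocity then yields $\sum_v\inv_v\alg(Q_v)=\tfrac12$ for every adelic point of $X$, so $X(\ad_\Q)^{\alg}=\emptyset$ and $\alg$ obstructs the existence of rational points.
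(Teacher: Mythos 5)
Your plan is the same as the paper's: invoke Lemmas~\ref{lem:eg1} and~\ref{lem:eg2} to reduce everything to the single local invariant at $17$, then compute that invariant via equation~\eqref{eq:legendre}, noting (correctly) that~\eqref{eq:legendre} only requires $17\mid\theta$ to odd order and does not depend on condition~\ref{it:special}. That is exactly what the paper does, and the verification of the hypotheses of Lemma~\ref{lem:eg1} with $p=103$, $q=47$ is carried out correctly in your sketch.

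The one genuine gap is that you stop precisely where the content of the proposition lies: you write ``I expect the outcome to be $c=\tfrac12$'' without producing the integer point on $Y$ or evaluating the Legendre symbol. Since the entire assertion of the proposition is that this particular surface \emph{is} a counterexample (rather than $c=0$, in which case there would be no obstruction at all), the proof is not complete until that computation is done. The paper supplies $P=[20:13:-9:0]\in Y(\Q)$, hence $f = 20 X_0^2 + (47\cdot 13) X_1^2 + (103\cdot 9) X_2^2$, and checks that $\tilde f$ is a non-square at every point of $\tilde X(\F_{17})$, giving $c=\tfrac12$. One further small inaccuracy: the reduction of $X$ modulo $17$ is $X_0^4 + 13X_1^4 + 16X_2^4 = 0$, which is only \emph{isomorphic} to $X_0^4+X_1^4+X_2^4=0$ (the coefficients are fourth powers mod~$17$); if you evaluate $\tilde f$ literally on $\F_{17}$-points of the Fermat quartic rather than on the actual reduction, you must also transport $\tilde f$ through the isomorphism, or the Legendre symbol computed could be wrong. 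Keeping everything on the actual reduction, as the paper does, avoids this pitfall.
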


\begin{proof}
Since $X$ satisfies the conditions of Lemma~\ref{lem:eg1}, it only
remains to evaluate the obstruction at $17$.  On the quadric 
\[ 
Y : Y_0^2 + 47 Y_1^2 = 103 Y_2^2 + (17\times 47\times 103) Y_3^2,
\]
we can take $P=[20:13:-9:0] \in Y(\Q)$, and so obtain the Azumaya
algebra
\[
\alg = (17, (20 X_0^2 + (47\times 13) X_1^2 + (103\times 9)
X_2^2)/X_0^2).
\]
Evaluating the quadratic form $20 X_0^2 + (47\times 13) X_1^2 +
(103\times 9) X_2^2$ at any point of $X(\F_{17})$ gives a non-square
value modulo $17$, and therefore $\inv_{17} \alg(Q) = \frac{1}{2}$ for
all $Q \in X(\Q_{17})$.  Combined with the fact that the invariant is
$0$ at each other place, we deduce that $\sum_v \inv_v \alg(Q_v) =
\frac{1}{2}$ for all $(Q_v) \in X(\ad_\Q)$, and therefore that $\alg$
gives a Brauer--Manin obstruction to the existence of a rational point
on $X$.
\end{proof}

\begin{remark}
It was not \emph{a priori} clear that starting from different points
$P$ of the quadric $Y$ should always give the same invariant at $17$.
Different points $P$ might give algebras $\alg$ differing by a
constant algebra.  After all, in performing the verification of
Lemma~\ref{lem:eg2}, we could have replaced the point
$\tilde{P}=[5,5,1,0]$ by a scalar multiple, say $[1,1,7,0]$, and
$\tilde{f}$ would have been non-square at all points instead of
square.  However, our insistence that $P$ should be given by
coordinates which are coprime integers fixes the invariants at all
places other that $17$, and therefore (by the product rule) fixes the
invariant at $17$ as well.  A somewhat surprising conclusion is this:
that, given any point $\tilde{P} \in \tilde{Y}(\F_{17})$, at most half
of the scalar multiples of $\tilde{P}$ lift to rational points of $Y$
with coprime integer coordinates.
\end{remark}

\subsection*{Acknowledgements}
I thank Sir~Peter Swinnerton-Dyer for many useful conversations on
this subject, and Damiano Testa and Samir Siksek for comments on this
article.

\bibliographystyle{abbrv}
\bibliography{martin}

\end{document}